\newcommand{\norm}[1]{\left\lVert#1\right\rVert}
\newtheorem{assumption}{Assumption}
\newtheorem{proposition}{Proposition}
\newtheorem{definition}{Definition}
\title{\LARGE \bf
META-SMGO-$\Delta$: similarity as a prior in black-box optimization
}
\author{Riccardo Busetto$^{1}$, Valentina Breschi$^{2}$ and Simone Formentin$^{1}$% <-this % stops a space stops a space
\thanks{$^{1}$R. Busetto and S. Formentin are with Dip. di Elettronica, Informazione e Bioingegneria, Politecnico di Milano, Milano, Italy.
        {\tt\small {name}.{surname}@polimi.it}}%
\thanks{$^{2}$V. Breschi is with Dept. of Electrical Engineering, Eindhoven University of Technology, Eindhoven, Netherlands .
        {\tt\small v.breschi@tue.nl}}%
\thanks{This work was partially supported by the Italian Ministry of University and Research under the PRIN’17 project “Data-driven learning of constrained control systems”, contract no. 2017J89ARP.}
}
\begin{document}

\maketitle
\thispagestyle{empty}
\pagestyle{empty}

%%%%%%%%%%%%%%%%%%%%%%%%%%%%%%%%%%%%%%%%%%%%%%%%%%%%%%%%%%%%%%%%%%%%%%%%%%%%%%%%
\begin{abstract}
When solving global optimization problems in practice, one often ends up repeatedly solving problems that are similar to each others. By providing a rigorous definition of similarity, in this work we propose to incorporate the META-learning rationale into SMGO-$\Delta$, a global optimization approach recently proposed in the literature, to exploit priors obtained from similar past experience to efficiently solve new (similar) problems. Through a benchmark numerical example we show the practical benefits of our META-extension of the baseline algorithm, while providing theoretical bounds on its performance.
\end{abstract}
%%%%%%%%%%%%%%%%%%%%%%%%%%%%%%%%%%%%%%%%%%%%%%%%%%%%%%%%%%%%%%%%%%%%%%%%%%%%%%%%
\section{INTRODUCTION}
Black-box optimization is a fundamental tool in many areas of science and engineering, where the objective function and/or the constraints are unknown or expensive to evaluate. Such an approach to optimization has been studied extensively in the literature, with many different algorithms proposed for solving this problem. One popular approach is evolutionary algorithms, which are inspired by biological evolution and use techniques such as mutation and selection to search for optimal solutions \cite{back1996evolutionary}. Another approach is Bayesian optimization, which uses probabilistic models to guide the search process and has been shown to be effective in many applications \cite{brochu2010tutorial}. Other methods include gradient-based optimization \cite{nocedal2006numerical}, simulated annealing \cite{kirkpatrick1983optimization}, and particle swarm optimization \cite{kennedy1995particle}.

Despite the large number of algorithms available, black-box optimization remains a challenging problem, particularly in high-dimensional spaces or when the objective function is noisy or non-convex. Specifically, when the vector of parameters is large, one might require several iterations before converging to a satisfactory solution.
To avoid this, practitioners usually acquire expertise by repeatedly solving problems that, though different, share common features with each other, i.e., they are somehow \textit{similar}.
The idea behind this paper starts from the observation that the experience acquired in solving optimization problems with shared characteristics can be re-used as valuable prior to solve more efficiently a new, similar problem.
This intuition is also at the foundation of \emph{meta-learning}, a subfield of machine learning that focuses on developing algorithms that can automatically learn how to solve new tasks more efficiently and effectively by leveraging prior experience from similar tasks \cite{thrun1998learning, vilalta2002perspective, bengio2012practical}. Meta-learning algorithms typically learn a mapping from task descriptions to model parameters, which can then be adapted to new tasks with few or no examples \cite{finn2017model, ravi2018meta}.

There are several different approaches to meta-learning, including metric-based methods, model-based methods, and optimization-based methods \cite{vanschoren2018meta}. Metric-based methods learn a distance metric between tasks that can be used to transfer knowledge from one task to another. Model-based methods learn a shared model architecture that can be adapted to new tasks by adjusting its weights. Optimization-based methods learn to optimize the learning process itself, such as by learning to initialize the model parameters or by learning to select the hyperparameters that control the learning process \cite{feurer2015initializing,rothfuss2023meta}.

Meta-learning has a wide range of applications in machine learning, including few-shot learning, transfer learning, and adaptive optimization \cite{wang2020generalizing, finn2017model, li2019episodic}. As such, it is an active area of research with many exciting new developments and open research questions. However, apart from few, very recent, exceptions (see, e.g., \cite{arcari2020meta,richards2022control,guo2023imitation,ecker2023data}), it has seldom been exploited in optimization and learning for control. Indeed, several control algorithms, especially those based on the use of experimental data, require definition of hyper-parameters that significantly affect the resulting closed-loop performance \cite{chakrabarty2022optimizing}. In general, these hyper-parameters are determined with expensive experiments, involving either sensitivity analyses, or driven by expert-based and rule-of-thumb-based design.
In this work, we take a recently introduced black-box optimization technique called SMGO-$\Delta$ \cite{sabug2022smgo}, based on set-membership identification and with the advantage of being simple and effective as compared to similar approaches like Bayesian optimization, and we apply meta-learning tools to show that prior experience with similar problems can be used to boost SMGO-$\Delta$ performance. In particular, we will provide theoretical bound for the performance, and we will show on a numerical case study that the resulting META-version of SMGO-$\Delta$ outperforms its standard implementation in terms of number of iterations required to converge and number of constraint violations.

The remainder of the paper is organized as follows. In Section~\ref{sec:background}, we review the main features of SMGO-$\Delta$, which are of paramount importance for the self-consistency of the work. These preliminaries allows us to formulate the problem stated in Section~\ref{sec:problem} and to discuss the proposed META extension of SMGO-$\Delta$, introduced in Section~\ref{sec:meta}. In this section we further prove two key properties of META-SMGO-$\Delta$, highlighting its potential advantages over the baseline algorithm, and we show how the proposed approach can be implemented in practice. The effectiveness of META-SMGO-$\Delta$ is then discussed in Section~\ref{sec:experiments}, showing the possible advantages of exploiting similarities for hyper-parameter tuning. The paper is ended by some concluding remarks.

%\subsection{Notation}
%\begin{itemize}
%    \item \textbf{Bold} symbols indicate vectors and matrices containing history of the optimization process, i.e., the number of columns is equivalent to the number of iterations $i=1,\ldots,n$.
%    E.g., $\bm{X}=[X^{(1)},\ldots,X^{(n)}]$, $\bm{z}=[z^{(1)},\ldots,z^{(n)}]$ are the matrix and the row-vector containing history of sampled points during SMGO-$\Delta$ routine.
%    \item Symbols with apex$^*$ indicate optimal values.
%    \item For the sake of brevity, symbols $g_s$ and $c_s$ synthetically indicate \textit{all} the constraint functions and values such that $g_s, s=1,\ldots,S$.
%\end{itemize}

\section{An overview on SMGO-$\Delta$}
\label{sec:background}
%In this section, the SMGO-$\Delta$ algorithm is summarized in its essential concepts, paramount for the self-consistency and understanding of the work and its \emph{meta} extension.
SMGO-$\Delta$ is an iterative optimization procedure specifically devised to tackle the following class of optimization problems
\begin{subequations}\label{eq:generic_opt}
\begin{align}
&\min_{X \in \mathcal{X}} ~~f(X)\\
&\quad \mbox{s.t. }~g_{s}(X)\geq 0,~~s=1,\ldots,S,
\end{align}
\end{subequations}
where the function $f(X):\mathcal{X} \mapsto \mathbb{R}$ one aims at minimizing is assumed to be \emph{unknown}. Note that the minimum, namely
\begin{equation}\label{eq:minimum}
X^{\star}=\underset{X \in \mathcal X, g_{s}(X)\geq 0}{\arg\min} f(X),
\end{equation}
should satisfy a set of inequality constraints, characterized by $g_s(X):\mathcal{G}_s \mapsto \mathbb{R}$, for $s=1,\ldots,S$, that are also supposed to be \emph{unknown}. The method is conceived to solve \eqref{eq:generic_opt} under the following assumptions.
\begin{assumption}\label{assumption1}
    The functions $f(\cdot)$ and $g_{s}(\cdot)$ are assumed to be Lipschitz continuous, namely they satisfy the following inequality:
    \begin{equation}
    |h(X_{1})-h(X_{2})|\leq \gamma_{h}\norm{X_1 - X_2}_{2},~~\forall X_1,X_2 \in \mathcal{X},
    \end{equation}
    with $h(\cdot)$ being a placeholder to indicate either of the two functions, and $\gamma_{h}>0$ being the associated Lipschitz constant.
\end{assumption}
\begin{assumption}\label{assumption2}
The feasible set $\mathcal{X}$ satisfies the following:
\begin{equation}
\mathcal{X} \cap \left\{ \bigcup_{s=1}^S \mathcal{G}_s \right\} \neq \emptyset,
\end{equation}
implying that the solution space $\mathcal{X}$ is not disjoint from the space of feasible solutions and, thus, that a solution to \eqref{eq:generic_opt} exists. 
\end{assumption}
By relying on these assumptions, SMGO-$\Delta$ addresses the problem in \eqref{eq:generic_opt} by iteratively performing three steps, summarized in the following, until a maximum number of iterations $n_{\mathrm{max}}$ is attained.

Let $\mathcal{D}^{(n)}$ be the set comprising all the information on the points $X$ explored up to the $n-1$-th iteration, \emph{i.e.,} the points themselves and the corresponding values of $f(\cdot)$ and $g_{s}(\cdot)$
\begin{align}
    & z^{(n)}=f\left(X^{(n)}\right),\\ 
    & c_{s}^{(n)}=g_{s}\left(X^{(n)}\right).
\end{align}
The information carried in $\mathcal{D}^{(n)}$ are initially used in SMGO-$\Delta$ to estimate the Lipschitz constant of the objective function as 
\begin{equation}
    \gamma_{f}^{(n)}=\max_{\substack{(X^{(i)},z^{(i)}) \in \mathcal{D}^{(n)},\\
    (X^{(j)},z^{(j)}) \in \mathcal{D}^{(n)}}} \left(\frac{|z^{(i)}-z^{(j)}|}{\|X^{(i)}-X^{(j)}\|},\underline{\gamma}_{f}\right),
\end{equation}
and that of each function characterizing the constraints as
\begin{equation}
   \gamma_{g_s}^{(n)}=\max_{\substack{(X^{(i)},c_s^{(i)}) \in \mathcal{D}^{(n)},\\
    (X^{(j)},c_s^{(j)}) \in \mathcal{D}^{(n)}}} \left(\frac{|c_s^{(i)}-c_s^{(j)}|}{\|X^{(i)}-X^{(j)}\|},\underline{\gamma}_{g_s}\right), 
\end{equation}
for $s=1,\ldots,S$. Note that both computation rely on a lower-bound on the Lipschitz constant (\emph{i.e.,} $\underline{\gamma}_{f}$ and $\{\underline{\gamma}_{g_{s}}\}_{s=1}^{S}$), initialized by the user and then iteratively replaced with the updated estimate of these constants over exploration\footnote{At the first iteration of SMGO-$\Delta$, the estimates of the Lipschitz constants are set at the user-defined lower-bound.}. These estimates are used to update the \emph{bounding functions}
\begin{subequations}\label{eq:bounding_functions}
\begin{align}
    &\overline{f}^{(n)}(X) = \min_{k=1,\ldots,n} \big( 
    z^{(k)} + \gamma^{(n)}_f \norm{X - X^{(k)}}
    \big),\\
    &\underline{f}^{(n)}(X) = \max_{k=1,\ldots,n} \big( 
    z^{(k)} - {\gamma}^{(n)}_f \norm{X - X^{(k)}}
    \big),\\
    &\overline{g}_{s}^{(n)}(X) = \min_{k=1,\ldots,n} \big( 
    c_{s}^{(k)} + {\gamma}^{(n)}_{g_{s}} \norm{X - X^{(k)}}
    \big),\\
    &\underline{g}_{s}^{(n)}(X) = \max_{k=1,\ldots,n} \big( 
    c_{s}^{(k)} - \Tilde{\gamma}^{(n)}_{g_s} \norm{X \!-\! X^{(k)}}
    \big),
\end{align}
\end{subequations}
for each $s \in\{1,\ldots,S\}$, the \emph{central approximations}
\begin{subequations}\label{eq:central_approx}
    \begin{align}
        &     \Tilde{f}^{(n)}(X) = \frac{1}{2} \Big(
    \overline{f}^{(n)}(X) + \underline{f}^{(n)}(X)
    \Big),\\
        &     \Tilde{g}_{s}^{(n)}(X)= \frac{1}{2} \Big(
    \overline{g_{s}}^{(n)}(X) + \underline{{g}_{s}}^{(n)}(X)
    \Big),~s\!=\!1,\ldots,S.
    \end{align}
\end{subequations}
and the \emph{uncertainty bounds}
\begin{subequations}
    \begin{align}
        & \lambda^{(n)}_f(X) = \overline{f}^{(n)}(X) - \underline{f}^{(n)}(X),\\
        & \lambda^{(n)}_{g_s}(X) = \overline{g_s}^{(n)}(X) - \underline{g_s}^{(n)}(X),~~s=1,\ldots,S.
    \end{align}
\end{subequations}
The central approximation and the uncertainty bounds are then exploited by SMGO-$\Delta$ to choose the next candidate point $X_{\theta}^{(n)}$, which is obtained by solving the following optimization problem
\begin{subequations}\label{eq:exploitation}
\begin{align}
    &\min_{X \in E^{(n)} \cup \mathcal{T}^{(n)}}~~\tilde{f}^{(n)}(X)-\beta\lambda_{f}^{(n)}(X)\\
    & \qquad \mbox{s.t. }~ \Delta \tilde{g}_{s}^{(n)}(X)\!+\!(1\!-\!\Delta) \underline{g}_{s}^{(n)}(X)\geq 0,~\forall s,
\end{align}
where $\beta,\Delta >0$ are tunable parameters, and
the state of space is restricted to the intersection of a (cumulative) set of candidate points $E^{(n)}$ and a \emph{trust region} $\mathcal{T}^{(n)}$ centered 
on the current estimate $X^{\star(n)}$ of the minima \eqref{eq:minimum}, here defined as
\end{subequations}
\begin{equation}\label{eq:trust}
\mathcal{T}^{(n)}=\left\{ X \in \mathcal{X}: \left\|X-X^{\star(n)}\right\|_{2}\leq v^{(n)}\right\}.
\end{equation}
With this candidate, the \emph{expected improvement} attained with the bounding function $\underline{f}^{(n)}(\cdot)$ is assessed by checking the following condition:
\begin{equation}\label{eq:expected-improvement-test}
\underline{f}^{(n)}(X_{\theta}^{(n)})\leq z^{\star(n)}-\alpha \gamma_{f}^{(n)}
\end{equation}
where $z^{\star(n)}$ is the estimate of the function value at the current estimated minima $X^{\star(n)}$, and $\alpha>0$ is another tunable parameter of the approach. If the latter inequality is satisfied, then $X_{\theta}^{(n)}$ becomes the new exploration point,\emph{i.e.,}
\begin{equation}
    X^{(n+1)} \leftarrow X_{\theta}^{(n)},
\end{equation}
otherwise \emph{exploration} is promoted over more uncertain regions by defining a new candidate point $X_\psi^{(n)}$ as follows
\begin{equation}
X_\psi^{(n)}= \underset{X\in E^{(n)}}{\arg\max}~~ \xi_{\psi}^{(n)}(X),
\end{equation}
where $\xi_{\psi}^{(n)}: E^{(n)} \rightarrow \mathbb{R}$ is the so-called \emph{exploration merit function}. Since this specific step is not modified by our META extension of the approach, we refer the reader to \cite{sabug2022smgo} for additional details. 
\section{PROBLEM STATEMENT}\label{sec:problem}
Let us assume that we have already exploited SMGO-$\Delta$ to retrieve the global optimum of $M$ functions, that are \emph{similar} to the one we aim at optimizing. Our goal is to exploit this similarity to enhance the performance and speed up the convergence of SMGO-$\Delta$ in tackling this new optimization instance.

To formally state this problem, let us define the concept of \emph{similarity} considered in this work. To this end, we define a \emph{family} of functions as follows:
\begin{equation}\label{eq:family}
\mathcal{F}_{\varphi}:=\{f: \Phi \times \mathcal{X} \rightarrow \mathbb{R} \mbox{ s.t. } z=f(X,\varphi)\},
\end{equation}
which, thus, comprises functions sharing the same parameterization, but (eventually) being characterized by different parameters $\varphi \in \Phi \subseteq \mathbb{R}^{n_{\varphi}}$.

\begin{definition}\label{Def:similarity}
Let $f_{1}(X,\varphi_{1})$ and $f_{2}(X,\varphi_{2})$ be two functions belonging to the family $\mathcal{F}(\varphi)$, and satisfying Assumption~\ref{assumption1}. These functions are said to be:
\begin{itemize}
    \item $\rho$-similar, if there exists $\rho<\infty$ such that the radius $\underline{\rho}$ of the \emph{smallest enclosing circle} of their minima $X_{1}^{\star}$ and $X_{2}^{\star}$ satisfies $\underline{\rho}\leq \rho$.
    \item $\zeta$-similar, if there exists $\zeta \in [0,\infty)$ such that $\overline{\zeta} \leq \zeta$, with $\overline{\zeta}$ being the maximum distance between the Lipschitz constants of the two functions, \emph{i.e.,}
    \begin{equation}
        \overline{\zeta}=\max |\gamma_{1}-\gamma_{2}|.
    \end{equation}
\end{itemize}
\end{definition}

Suppose now that $M$ constrained problems in the form of \eqref{eq:generic_opt} are solved with SMGO-$\Delta$ (considering the same number of optimization steps $n_{max}$), for different instances of the cost function (yet belonging to a family $\mathcal{F}_\varphi$ and being $\rho$-similar and $\zeta$-similar according to Definition~\ref{Def:similarity}) and the same set of constraints. As outcomes of these optimization routines, one can extract the resulting estimated minima $X_{i}^{\star}$ and minimum value $z_{i}^{\star}$, the set of explored states $\{X_{i}^{(n)}\}_{n=1}^{n_{max}}$, function values $\{z_{i}^{(n)}\}_{n=1}^{n_{max}}$ and constraints $\{c_{s,i}^{(n)}\}_{n=1}^{n_{max}}$, for $s=1,\ldots,S$, and the estimates of the Lipschitz contants
\begin{equation}
    \hat{\gamma}_{i}=\begin{bmatrix}
    \gamma_{f,i}^{(n_{max})} & \gamma_{g_{1},i}^{(n_{max})} & \cdots & \gamma_{g_{S},i}^{(n_{max})}
    \end{bmatrix}^{\top},
\end{equation}
with $i=1,\ldots,M$. These elements can be used to construct a \emph{META-dataset} $\mathcal{D}^{\mathrm{meta}}$, that, in turn, can be exploited when a new optimization problem \eqref{eq:generic_opt} with cost function $f(X,\varphi) \in \mathcal{F}_{\varphi}$, being $\rho$-similar and $\zeta$-similar to the $M$ functions already optimized and the same constraints characterizing the $M$ problems already tackled. Our goal can be formalized as follows.

Consider an instance of \eqref{eq:generic_opt}, with cost function belonging to a family of functions $\mathcal{F}_{\varphi}$. Suppose that such a function is $\rho$-similar and $\zeta$-similar to the objective functions of $M$ problems of the class \eqref{eq:generic_opt} that have already been solved. Under the assumption that the new problem shares the same constraints of all the others, we aim at exploiting the meta-dataset $\mathcal{D}^{\mathrm{meta}}$ to $(i)$ reduce the number of iterations required by SMGO-$\Delta$ to find the global optimum for the new instance of the problem, and $(ii)$ reducing the number of constraints violations throughout the new optimization.   

In this work, we translate this into exploiting $\mathcal{D}^{\mathrm{meta}}$ to initialize both the first evaluation point $X^{(1)}$ of the new instance of SMGO-$\Delta$ and the initial lower bounds on the Lipschitz constant $\underline{\gamma}_{f}$
required at the first iteration to compute the bounding functions and the central approximations in \eqref{eq:bounding_functions}-\eqref{eq:central_approx}.

\section{META-LEARNING FOR SMGO-$\Delta$}\label{sec:meta}
The performance SMGO-$\Delta$ are, by construction, shaped by the initial choices that the user has to perform. Here we focus on two crucial hyper-parameters, namely the initial exploration point and the lower bound for the cost's Lipschitz constant. Our idea is thus to extend this algorithm to its META version, relying on the intuition that information collected solving similar problems can help in improving the choices of these initial parameters, ultimately enhancing the optimization procedure. 

To this end, let us introduce the similarity vector
\begin{equation}\label{eq:similarity}
\mathcal{S}=\begin{bmatrix}
\mathcal{S}_{1} & \cdots & \mathcal{S}_{M}
\end{bmatrix}^{\top} \in \mathbb{R}^{M},
\end{equation}
whose elements satisfy the following relationships:
\begin{subequations}\label{eq:similarity1}
\begin{align}
& \mathcal{S}_{i} \geq 0,~~i=1,\ldots,M,\\
& \sum_{i=1}^{M} \mathcal{S}_{i}=1,
\end{align}
\end{subequations}
and that characterize the relative similarity between the problem we aim at solving and the $M$ ones whose features are included in the META-dataset $\mathcal{D}^{\mathrm{meta}}$. Note that, if the new problem has already been solved (and it corresponds to one associated to the $m$-th instance of the META-dataset), then we have $\mathcal{S}_{m}=1$ and $\mathcal{S}_{j}=0$, for all $j \in \{1,\ldots,M\}$, $j \neq m$. Let us then define the META-initialization of $X^{(1)}$ and $\underline{\gamma}_{f}$ as follows:
\begin{subequations}
\begin{align}
    & X^{(1),\mathrm{meta}}=\sum_{i=1}^{M}\mathcal{S}_{i}X_i^{\star(n_{max})},\label{eq:meta_init1}\\
    & \underline{\gamma}_{f}^{\mathrm{meta}}=\sum_{i=1}^{M}\mathcal{S}_{i}\gamma_{f,i}^{(n_{max})},\label{eq:meta_init2}
\end{align}
so that the initial exploration point and lower bound on the Lipschitz constant for META-SMGO-$\Delta$ are constructed as convex combinations of the estimates of the global minima and Lipschitz constants comprised in $\mathcal{D}^{\mathrm{meta}}$. 
\end{subequations}

We can now formalize the impact of these META initialization on the difference between the initial estimate of the minimal function value and the true minimum as follows.
\begin{proposition}\label{proposition1}
Consider problem \eqref{eq:generic_opt} and assume that its cost function $f(X)$ is $\rho$-similar and $\zeta$-similar (in the spirit of Definition~\ref{Def:similarity}) to a set of $M$ functions $\{f_{i}(X)\}_{i=1}^{M}$ for which \eqref{eq:generic_opt} has already been solved. Assume that $\rho \leq v^{(1)}$, with $v^{(1)}$ characterizing the trust region $\mathcal{T}^{(1)}$ according to \eqref{eq:trust}. Further assume that $X^{(1),\mathrm{meta}}$ in \eqref{eq:meta_init1} is a feasible initial exploration point. Under these assumption,for the first exploration point obtained with a META-initialization (in \eqref{eq:meta_init1}-\eqref{eq:meta_init2}), the following bound holds 
\begin{equation}\label{eq:prop_bound}
    z_{\theta}^{(1)}-z^{\star} \leq 2\rho(\gamma_{f}^{\mathrm{max}}+\zeta),
\end{equation}
where 
\begin{align}
z_{\theta}^{(1)}=\underline{f}^{(1)}(X_{\theta}^{(1)}),~~~\gamma_{f}^{\mathrm{max}}=\max_{i=1,\ldots,M}~~\gamma_{f,i}^{(n_{max})}.
\end{align}
\end{proposition}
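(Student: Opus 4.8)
The plan is to bound $z_{\theta}^{(1)}-z^{\star}$ by chaining three elementary inequalities, isolating the two notions of similarity in separate steps and keeping the candidate-selection machinery out of the core argument.

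First I would translate $\rho$-similarity into a bound on the distance between the META-initialization and the true optimizer $X^{\star}$. Since the new cost is $\rho$-similar to each solved instance, the smallest enclosing circle of $\{X^{\star},X_{i}^{\star(n_{max})}\}$ has radius at most $\rho$; for a pair of points this radius equals half their distance, so $\norm{X^{\star}-X_{i}^{\star(n_{max})}}\leq 2\rho$ for every $i$. Using $X^{(1),\mathrm{meta}}=\sum_{i}\mathcal{S}_{i}X_{i}^{\star(n_{max})}$ together with $\mathcal{S}_{i}\geq 0$ and $\sum_{i}\mathcal{S}_{i}=1$, the triangle inequality and convexity of the norm give
\begin{equation*}
\norm{X^{(1),\mathrm{meta}}-X^{\star}}=\norm{\sum_{i=1}^{M}\mathcal{S}_{i}\big(X_{i}^{\star(n_{max})}-X^{\star}\big)}\leq\sum_{i=1}^{M}\mathcal{S}_{i}\norm{X_{i}^{\star(n_{max})}-X^{\star}}\leq 2\rho .
\end{equation*}

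Next I would reduce the candidate value to the sampled value at the initialization point. At the first iteration the only datum is $X^{(1)}=X^{(1),\mathrm{meta}}$, so the lower bounding function collapses to $\underline{f}^{(1)}(X)=z^{(1)}-\gamma_{f}^{(1)}\norm{X-X^{(1)}}$, which never exceeds $z^{(1)}$; hence $z_{\theta}^{(1)}=\underline{f}^{(1)}(X_{\theta}^{(1)})\leq z^{(1)}=f(X^{(1),\mathrm{meta}})$, irrespective of where the exploitation step \eqref{eq:exploitation} places $X_{\theta}^{(1)}$. Applying the Lipschitz continuity of $f$ from Assumption~\ref{assumption1} and the distance bound then yields $z^{(1)}-z^{\star}=f(X^{(1),\mathrm{meta}})-f(X^{\star})\leq\gamma_{f}\norm{X^{(1),\mathrm{meta}}-X^{\star}}\leq 2\rho\,\gamma_{f}$. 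Finally I would discharge the true constant $\gamma_{f}$ via $\zeta$-similarity: from $|\gamma_{f}-\gamma_{f,i}|\leq\zeta$ for the true constants of the solved instances, identified with their $n_{max}$-step estimates, one gets $\gamma_{f}\leq\gamma_{f,i}^{(n_{max})}+\zeta\leq\gamma_{f}^{\mathrm{max}}+\zeta$, and combining the three steps delivers $z_{\theta}^{(1)}-z^{\star}\leq 2\rho(\gamma_{f}^{\mathrm{max}}+\zeta)$. The hypotheses $\rho\leq v^{(1)}$ and feasibility of $X^{(1),\mathrm{meta}}$ enter only to guarantee that the constrained problem over $\mathcal{T}^{(1)}$ is well posed and that the optimizer lies within the explorable region, so that $z_{\theta}^{(1)}$ is meaningfully defined.

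I expect the main obstacle to be this last step, namely rigorously linking the true Lipschitz constant $\gamma_{f}$ of the new cost to the \emph{estimated} constants $\gamma_{f,i}^{(n_{max})}$ entering $\gamma_{f}^{\mathrm{max}}$. The finite-difference estimates are by construction lower bounds on the true Lipschitz constants, whereas Definition~\ref{Def:similarity} phrases $\zeta$-similarity in terms of the true constants; closing this gap requires either assuming the $n_{max}$-step estimates have saturated to the true values or restating the similarity notion directly for the estimated constants. A secondary technical point is the identification of the estimated minima $X_{i}^{\star(n_{max})}$ with the true minimizers used in the $\rho$-similarity definition, which I would likewise need to assume (or absorb into $\rho$).
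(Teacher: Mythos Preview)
Your argument is correct and follows the same overall skeleton as the paper: bound $\norm{X^{(1),\mathrm{meta}}-X^{\star}}\leq 2\rho$ via $\rho$-similarity and convexity of the norm, turn this into $z^{(1)}-z^{\star}\leq 2\rho\gamma_{f}$ via Lipschitz continuity, and then replace the unknown $\gamma_{f}$ by $\gamma_{f}^{\mathrm{max}}+\zeta$ via $\zeta$-similarity. The one genuine difference is the middle step. The paper argues that $\rho\leq v^{(1)}$ forces $X^{\star}\in\mathcal{T}^{(1)}$ and then invokes the expected-improvement test \eqref{eq:expected-improvement-test} to get $z_{\theta}^{(1)}\leq z^{\star(1)}-\alpha\gamma_{f}^{(1)}\leq z^{\star(1)}$, after which it adds and subtracts $2\rho\gamma_{f}^{(1)}$ to pass through the META-initialized constant before bounding by $\gamma_{f}^{\mathrm{max}}$. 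You instead observe directly that with a single datum $\underline{f}^{(1)}(X)=z^{(1)}-\gamma_{f}^{(1)}\norm{X-X^{(1)}}\leq z^{(1)}$ for every $X$, so $z_{\theta}^{(1)}\leq z^{(1)}=z^{\star(1)}$ without any appeal to the candidate-selection machinery, and you bound $\gamma_{f}$ by $\gamma_{f}^{\mathrm{max}}+\zeta$ in one stroke. Your route is more elementary and sidesteps the somewhat opaque implication ``$X^{\star}\in\mathcal{T}^{(1)}\Rightarrow$ \eqref{eq:expected-improvement-test} holds'' in the paper; the price is that the hypothesis $\rho\leq v^{(1)}$ no longer plays an active role in the estimate itself, only in guaranteeing that the exploitation problem is well posed, exactly as you note.

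Your closing caveats are on point: both the paper's proof and yours tacitly identify the $n_{max}$-step estimates $X_{i}^{\star(n_{max})}$ and $\gamma_{f,i}^{(n_{max})}$ with the true minimizers and Lipschitz constants appearing in Definition~\ref{Def:similarity}. This is a gap in the statement rather than in your argument, and the paper does not address it either.
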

\begin{proof}
Since the optimization subroutines used to populate the META-dataset are assumed to be $\rho$-similar, then the following holds
\begin{equation}\label{eq:bound_1}
\big\|X_{i}^{\star(n_{max})}-X^{\star}\big\|_{2} \leq 2\rho.
\end{equation}
The distance between $X^{(1),\mathrm{meta}}$ and the optimal solution can thus be bounded as
\begin{align}
\nonumber \big\|X^{(1)}-X^{\star}\big\|_{2}&=\big\|\sum_{i=1}^{M}\mathcal{S}_{i}X_{i}^{\star(n_{max})}-X^{\star}\big\|_{2}\\
\nonumber & \leq \sum_{i=1}^{M}\mathcal{S}_{i}\big\|X_{i}^{\star(n_{max})}-X^{\star}\big\|_{2}\\
& \leq 2\sum_{i=1}^{M}\mathcal{S}_{i}\rho=2\rho
\end{align}
where the second inequality holds thanks to the properties of the similarity vector $\mathcal{S}$ in \eqref{eq:similarity1} and the bound \eqref{eq:bound_1}. As $\rho\leq v^{(1)}$ by assumption, then $X^{*} \in \mathcal{T}^{(1)}$ and thus, \eqref{eq:expected-improvement-test} holds. This implies that
\begin{equation}
    z_{\theta}^{(1)}\leq z^{\star(1)}-\alpha\gamma_{f}^{(1)}
\end{equation}
Subtracting on both sides the actual value $z^{\star}$ of the function we aim at optimizing at the global optimimum we further obtain:
\begin{align}
   \nonumber z_{\theta}^{(1)}-z^{\star}& \leq z^{\star(1)}-\alpha\gamma_{f}^{(1)}-z^{\star}\\ 
   \nonumber & \leq  \gamma_{f}\|X^{(1)}-X^{\star}\|_{2}-\alpha\gamma_{f}^{(1)}\\
   &\leq 2\rho\gamma_{f}-\alpha\gamma_{f}^{(1)}\leq 2\rho\gamma_{f},
\end{align}
where $\gamma_{f}$ is the actual (unknown) Lipschitz constant of the function we are optimizing and the third inequality stems from the definition of Lipschitz continuity. Adding and subtracting on the right-hand-side of the previous inequality $2\rho\gamma_f^{(1)}$ we further obtain:
\begin{equation}
    z_{\theta}^{(1)}-z^{\star} \leq 2\rho\gamma_f^{(1)}+2\rho(\gamma_f^{(1)}-\gamma_{f}) \leq 2\rho(\gamma_{f}^{(1)}+\zeta).
\end{equation}
Since $\gamma_{f}^{(1)}=\underline{\gamma}^{\mathrm{meta}}_f$, based on the definition of $\underline{\gamma}^{\mathrm{meta}}_f$ in \eqref{eq:meta_init2}, it straightforwardly follows that $\gamma_{f}^{(1)}\leq \gamma^{\mathrm{max}}_{f}$, thus concluding the proof.
\end{proof}

The previous bound holds for any $\mathcal{S}$ satisfying \eqref{eq:similarity1}, yet it is of paramount importance for the similarity vector $\mathcal{S}$ to provide a reliable estimate of the similarity between the problem we aim at tracking and the $M$ ones that we have already solved to further reduce the distance of $z^{*(n)}_\theta$ from $z^*$. Toward this goal, in this paper we propose to iteratively evaluate similarity through the META-SMGO-$\Delta$ iterations as summarized in Algorithm~\ref{alg:cap}, thus considering an iteration varying similarity vector. 

Since at the beginning of the new optimization routine no information is available on the new function to be optimized, we initially impose
\begin{equation}
\mathcal{S}^{(1)}=\begin{bmatrix}
        \frac{1}{M} & \frac{1}{M} & \cdots & \frac{1}{M}
    \end{bmatrix}^{\top},
\end{equation}
not to (wrongly) prioritize any instance of the META-dataset with respect to the others. Since at each new iteration $n \in [1,n_{max}]$ of META-SMGO-$\Delta$ we have access to a new evaluated point $X^{(n)}$, they are incrementally employed to refine our initial guess. Specifically, we evaluate the unknown function we aim at optimizing at the current data point, namely  
\begin{equation*}
z^{(n)}=f\left(X^{(n)}\right). 
\end{equation*}
To extrapolate the updated similarity vector $\mathcal{S}^{(n)}$, the latter is then compared with the following natural neighbor interpolation\footnote{Accordingly, the more iterations $n_{max}$ are performed, the more $\hat{z}_{i}^{(n)}$ will be informative.} of 
\begin{align}\label{eq:interp}
    \hat{z}_i^{(n)} = \sum_{k=1}^{n_{max}} w_i(X_{i}^{(k)})z_{i}^{(k)},~~~~i=1,\ldots,M,
\end{align}
where we use a set of Laplacian weights \cite{bobach2009natural}. In particular, we solve the optimization problem (nested in the SMGO-$\Delta$ baseline routine):
\begin{subequations}\label{eq:tildeS_update}
    \begin{align}
    & \min_{\hat{\mathcal{S}}}~~\|z^{(n)}-\hat{\mathcal{S}}^{\top}\Hat{Z}\|_{2}^{2}\\
    & \quad \mbox{s.t. }~~ \hat{\mathcal{S}}_{i} \geq 0,~~i=1,\ldots,M,\\
    &\qquad \quad \sum_{i=1}^{M} \hat{\mathcal{S}}_{i}=1,
    \end{align}
\end{subequations}
where $\hat{Z}=\begin{bmatrix}\hat{z}_1^{(n)} & \cdots &\hat{z}_M^{(n)}\end{bmatrix}^{\top}$. Its solution $\hat{\mathcal{S}}^{\star}$ is then combined with the similarity vector available at the beginning of current iteration, namely
\begin{equation}\label{eq:s_theta}
    \mathcal{S}_{\theta}^{(n)}=\mathcal{S}^{(n)}+\tau^{n-1}\hat{\mathcal{S}}^{\star},
\end{equation}
where $\tau \in [0,1]$ is a \emph{discounting factor} introduced to promote smoothness in the similarity estimate over consecutive iterations. Clearly, \eqref{eq:s_theta} does not satisfy the properties of the similarity vector (see \eqref{eq:similarity1}). Accordingly, the elements of $\mathcal{S}_{\theta}^{(n)}\!\!=\!\!\mathcal{S}^{(n+1)}$ are then normalized, in order to lay in $[0,1]$. 

This updated estimate of similarity matrix is exploited to update $\underline{\gamma}_{f}$ as
\begin{equation*}
\underline{\gamma}_{f}=(\mathcal{S}_{\theta})^{\top}\begin{bmatrix} \underline{\gamma}_{f,1}^{(n_{max})}\\
\vdots\\ \underline{\gamma}_{f,M}^{(n_{max})},
\end{bmatrix}
\end{equation*}
\emph{before} the regular SMGO-$\Delta$ Lipschitz constant estimation. Additionally, after the exploitation subroutine of SMGO-$\Delta$, the estimated similarity vector is also used to promote sampling near the updated estimate of the optimal value as follows:
\begin{equation}\label{eq:exploitation-modification}
    X_\vartheta^{(n)} = (1 - \tau^{n-1})X_\theta^{(n)} + \tau^{n-1} (\mathcal{S}_{\theta}^{(n)})^{\top}\mathbf{X}^{\star(n_{max})}.
\end{equation}
where $\mathbf{X}^{\star(n_{max})}$ is a vector stacking the estimated global minima comprised in $\mathcal{D}^{\mathrm{meta}}$. This new point is then tested with the \emph{expected improvement} check \eqref{eq:expected-improvement-test}, and selected as the new sampling point $X^{(n+1)}$ if it passes this check. Otherwise, the exploration routine takes place with no difference with respect the original SMGO-$\Delta$. Note that, in this case, the discounting factor is of fundamental importance to exploit \emph{meta} information at the beginning of the optimization procedure, and gradually relying on the SMGO-$\Delta$ capabilities of finding the minimum when the number of iterations increases.
\begin{algorithm}
\caption{Meta SMGO-$\Delta$}\label{alg:cap}
\begin{algorithmic}
\Require $\mathcal{S}^{(1)},\mathcal{D}^{\mathrm{meta}}$
\State $X^{(1)}=\mathcal{S}^{\top}
\begin{bmatrix}X^{\star(n_{max})}_1 & \cdots & X^{\star(n_{max})}_M\end{bmatrix}^{\top}$
\State $\underline{\gamma}_f^{(1)}=\mathcal{S}^{\top} 
\begin{bmatrix}\gamma_{f,1}^{\star(n_{max})} & \cdots & \gamma_{f,M}^{\star(n_{max})}\end{bmatrix}^{\top}$
\While {$n \leq n_{\mathrm{max}}$}
    \State Evaluate $z^{(n)}=f(X^{(n)})$, $c_s^{(n)}=g_s(X^{(n)})$
    \For {$i = 1,\ldots, M$}
        \State Compute $\hat{z}_i^{(n)}$ as in \eqref{eq:interp}
    \EndFor
    \State Find $\hat{\mathcal{S}}^{\star}$ by solving \eqref{eq:tildeS_update}
    \State Update $\mathcal{S}_\theta^{(n)}=\mathcal{S}^{(n)}$ as in \eqref{eq:s_theta}
    \State Find $X^{(n+1)}$ with modified SMGO-$\Delta$ \eqref{eq:exploitation-modification}
\EndWhile
\end{algorithmic}
\end{algorithm}

\section{NUMERICAL EXPERIMENTS}\label{sec:experiments} 
In this preliminary work, advantages of meta-learning methodology are illustrated with the low-dimensional ($X \in \mathbb{R}^2$) example taken from \cite{sabug2022smgo} (in noiseless settings).
The class $\mathcal{F}_\varphi$ is the parameterized Styblinski-Tang function with offset, defined as
\begin{align}
\label{eq:f_class_example}
    f(X,\varphi_f) = \frac{1}{a_7}
    \Big(
    &a_1X_1^4 - a_2X_1^2+a_3X_1 +\\
    \nonumber&a_1X_2^4 - a_2X_2^2+a_3X_2 + a_8
    \Big).
\end{align}
The constraints are fixed and equal to
\begin{align}
    g_1(X) &= -4 + \norm{ X - [-2.90, 2.90]^{\top}}_2,\\
    g_2(X) &= \cos{\big(2 \norm{ X - [\ \ 2.90, 2.90]^{\top}}_2\big)}.
\end{align}
Each $a_j$, $j=1,\ldots,8$ is randomly obtained as $a_j = a_j^{\mathrm{o}}+\delta_j$, where $a_j^{\mathrm{o}}$ is the nominal value as reported in Tab. \ref{tab:nominal_values}, and $\delta_j$ is a perturbation $\delta_j \sim \mathcal{U}(-1,1)\cdot \delta_{\mathrm{max}}$.
The meta-data-set $\mathcal{D}^{\mathrm{meta}}$ (containing $M=10$ meta-functions) is generated optimizing each $f_m \in \mathcal{D}^{\mathrm{meta}}$ with SMGO-$\Delta$ (with parameters values reported in Tab. \ref{tab:smgo-params}). 
The generated functions are $\Tilde{\rho}=1.5$ $\Tilde{\zeta}=3000$ similar to each other.
In Fig. \ref{fig:nominal-optim}, the contour plot of nominal $f$ and the sampled points during the procedure are displayed, showing that the global minimum (green cross) is centered in $(-2.90,-2.90)$.
\begin{table}[ht]
    \caption{Nominal $\varphi_f$ values of the example}
    \label{tab:nominal_values}
    \centering
    \begin{tabular}{c|c|c|c|c|c|c|c}
        $a_1$ & $a_2$ & $a_3$ & $a_4$ & $a_5$ & $a_6$ & $a_7$ & $a_8$\\
        \hline
        1 & 16 & 5 & 1 & 16 & 5 & 2 & 80
    \end{tabular}
\end{table}
\begin{table}[ht]
    \caption{Parameters for $\mathcal{D}^{\mathrm{meta}}$ generation}
    \label{tab:smgo-params}
    \centering
    \begin{tabular}{c|c|c|c|c|c|c|c}
         n. iter & $\Delta$ & $\beta$ & $\alpha$ & $X^{(1)}$ & $\delta_{\mathrm{max}}$ & $\Tilde{\rho}$ & $\Tilde{\zeta}$\\
         \hline
         500 & 0.5 & 0.1 & 0.1 & (0.4775,0.0667) & 75\% & 1.5 & 3000
    \end{tabular}
\end{table}
\begin{figure}[ht]
    \centering
    \includegraphics{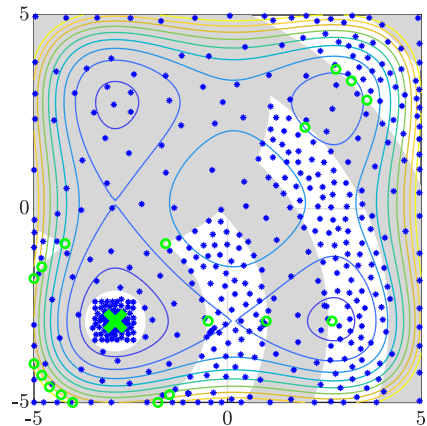}
    \caption{Contour plot of nominal $f$. In grey the unfeasible areas due to $g_1$ and $g_2$. The green cross indicate the global minimum while the green circles the local ones. Blue points the sampled $X$ in SMGO-$\Delta$ optimization.}
    \label{fig:nominal-optim}
\end{figure}
\subsection{Limit case: $f \equiv f_{\Tilde{m}}$}
The case where the new $f$ is equivalent to an already-optimized function $f_{\Tilde{m}} \in \mathcal{D}^{\mathrm{meta}}$ is first tested, to verify that the data-driven algorithm identifies the correct similarity for $\mathcal{S}$.
This holds true for a test with 10 repeated experiment, where the reference function changes such that $\Tilde{m} = 1,\ldots,10$. 
On average, the similarity coefficient associated to the correct $f_{\Tilde{m}}$ is 0.75, with a standard deviation of 0.25 as shown in Fig. \ref{fig:similarity} (as a consequence, the other coefficients of $\mathcal{S}$ are very small).\\
The non-perfect convergence to 1 is justified by the estimates of $\Hat{Z}$ that are obtained with interpolation. Nonetheless, for these limit cases convergence to the global optimum occurs almost immediately.
%
% As an example, Fig. \ref{fig:similarity} displays the trajectories of $S$ when $f \equiv f_9$ (the trend is analogous for different $\Tilde{m}$ reference functions). 
%
% tested on 10 experiments where the reference function $\Tilde{m}=1,\ldots,10$ changes. $S_{ref}$ is the vector where $S_{\Tilde{m}} = 1$ and the other elements are $0$.
%
\begin{figure}[ht]
    \centering
    \includegraphics{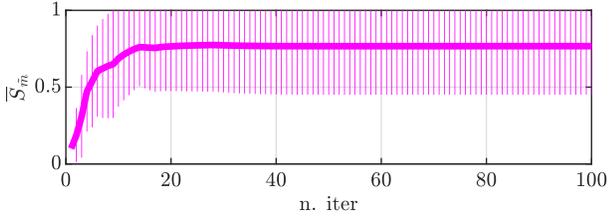}
    \caption{Trajectory of the average coefficient values $S_{\Tilde{m}}$, where $f = f_{\Tilde{m}}$}
    \label{fig:similarity}
\end{figure}

\subsection{General case}
\textit{SMGO-$\Delta$ with meta-learning} (META) is tested on $N=10$ experiments, where each new $f_n, n=1,\ldots,N$ to be optimized is generated according to \eqref{eq:f_class_example}.
% with $\delta_{\mathrm{meta}}=75\%$ (constraints $g_1$ and $g_2$ are unmodified).
Each $f_n$ is optimized with META ($\tau=0.9$) and compared to standard SMGO-$\Delta$ optimization, with $n_{\mathrm{max}}=100$ iterations.
META is initialized such that 
$X^{(1)}=(\mathcal{S}^{(1)})^{\top}\mathbf{X}^{\star(n_{max})}$, $\underline{\gamma}_f=(\mathcal{S}^{(1)})^{\top}\bm{\gamma}_{f}^{(n_{max})}$, with $\mathcal{S}=[0.1, \ldots, 0.1]^{\top}$. For fairness, in this preliminary work Lipschitz constants of the constraints $\underline{\gamma}_{g_1},\underline{\gamma}_{g_1} = 10^{-6}$ are initialized as if no meta-information is available -- though it is clear that a prior that can be exploited to improve their initialization exists. Future work will derive a rigorous meta-formulation that also considers the constraints.
Trajectories of the average \emph{best} $\overline{z}^{*(n)}$ over the $N$ experiments obtained with META and SMGO-$\Delta$ are shown in Fig. \ref{fig:meta_best_z}, from which we can appreciate that META significantly reduces the iterations required to reach the global minimum ($z^{(1)}_\theta - z^* = 15.10 \leq 2\cdot1.5\cdot (2817 + 3000)$, largely satisfying condition of Prop. \ref{proposition1}).
In addition, the average number of infeasible samples is significantly reduced (25\%, Fig. \ref{fig:boxplot-constraint}), even if no constraint prior is employed. 
Finally, notice how the initialization of $\underline{\gamma}_f$ is closer to the final estimate of $\gamma_f$ at convergence (Fig. \ref{fig:meta_gamma}), promoting a more targeted search of the regions where the minimum is expected from the start.
This holds also for functions where, due to the perturbation, the location of the (feasible) global minimum varies significantly from the nominal optimum (Fig. \ref{fig:contour-meta-example}).
Also, the update of $\mathcal{S}$, in case the META-initialization of $\underline{\gamma}^{(1)}$ is an overestimate of the true $\gamma$, can compensate the error such that $\Tilde{\gamma}^{(n)}\leq\Tilde{\gamma}^{(n+1)}$.
\begin{figure}[ht]
    \centering
    \includegraphics{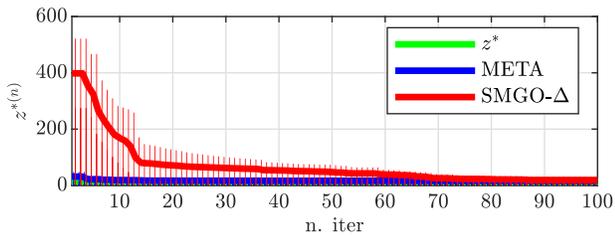}
    \caption{Average trajectory of $z^{*(n)}$ during optimization}
    \label{fig:meta_best_z}
\end{figure}
\begin{figure}[ht]
    \centering
    \includegraphics{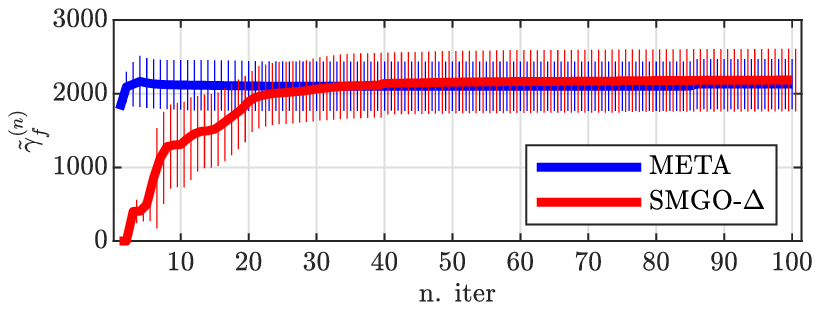}
    \caption{Average trajectory of $\Tilde{\gamma}^{(n)}_f$ during optimization}
    \label{fig:meta_gamma}
\end{figure}
\begin{figure}[ht]
    \centering
    \begin{subfigure}{.49\linewidth}
    \centering
    \includegraphics[scale=1]{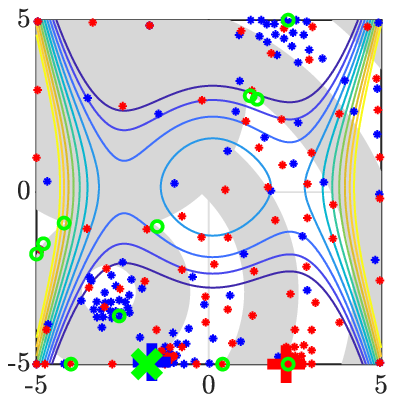}
    \caption{}
    \label{fig:contour-meta-example}
    \end{subfigure}%
    \begin{subfigure}{.49\linewidth}
    \centering
    \includegraphics[scale=1]{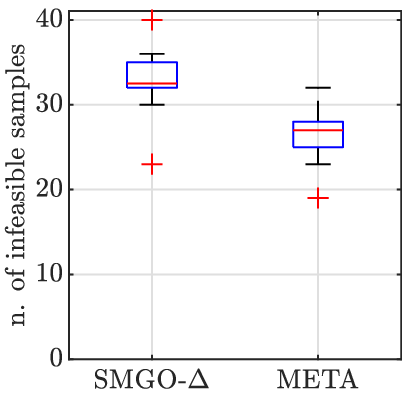}
    \caption{}
    \label{fig:boxplot-constraint}
    \end{subfigure}
    \caption{Contour plot of $f$, with sampled points with META (blue) and SMGO-$\Delta$ (red) (a) and average number of infeasible points during the optimization (b).}
\end{figure}

\subsection{Sensitivity to $M$}
Sensitivity to size $M$ of  the meta-data-set is tested for $M=5,10,20,40$, on the $N=10$ test experiments.. We found that, on average, both distance from the optimal value $z^{(1)} - z^*$ (Fig. \ref{fig:sensitivity_M_1}) and constraint violations (Fig. \ref{fig:sensitivity_M_2}) are reduced for greater $M$. Nonetheless, for $M=40$ there is a settling of these improvements. Numerical results confirms the intuition that a greater number of examples is informative up to the point where they become redundant.
\begin{figure}[H]
    \centering
    \begin{subfigure}{\linewidth}
    \centering
    \includegraphics[scale=1]{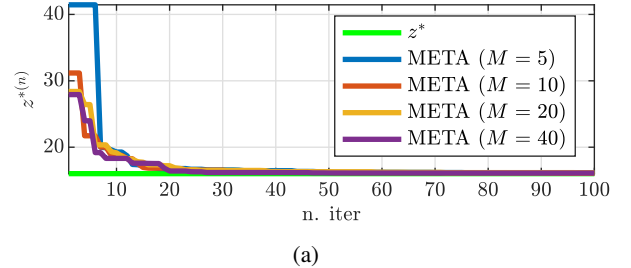}
    \caption{}
    \label{fig:sensitivity_M_1}
    \end{subfigure}%
    \\
    \begin{subfigure}{\linewidth}
    \centering
    \includegraphics[scale=1]{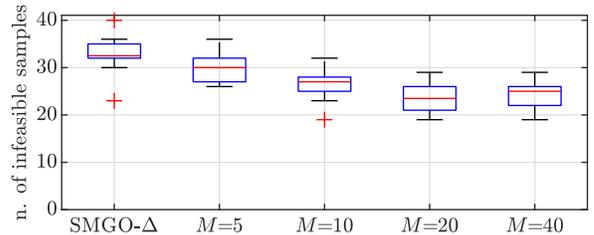}
    \caption{}
    \label{fig:sensitivity_M_2}
    \end{subfigure}
    \caption{Sensitivity to $M$ -- Average $z^{*(n)}$ over $N=10$ experiments (a) and average number of constraint violations (b) over $N=10$ experiments.}
\end{figure}

\subsection{Sensitivity to $\tau$}
Sensitivity to discounting factor $\tau$ is tested for $\tau=0.3,0.5,0.7,0.9,0.99$, on the $N=10$ test experiments. We recall that the purpose of $\tau$ is to trade-off exploitation of similarity and SMGO-$\Delta$ capabilities. The advantage of higher values of $\tau$, i.e., relying to similarity for more iterations, is appreciated for $n \leq 6$, where average closeness to $z^*$ is reduced (Fig. \ref{fig:sensitivity_tau_1}). However, average convergence is reached almost simultaneously in the experiments.
The number of constrain violations is also reduced for higher values of $\tau$ (Fig. \ref{fig:sensitivity_tau_2}), though for $\tau=0.99$, dispersion increases since the algorithm heavily relies on the similarity).
\begin{figure}[H]
    \centering
    \begin{subfigure}{\linewidth}
    \centering
    \includegraphics[scale=1]{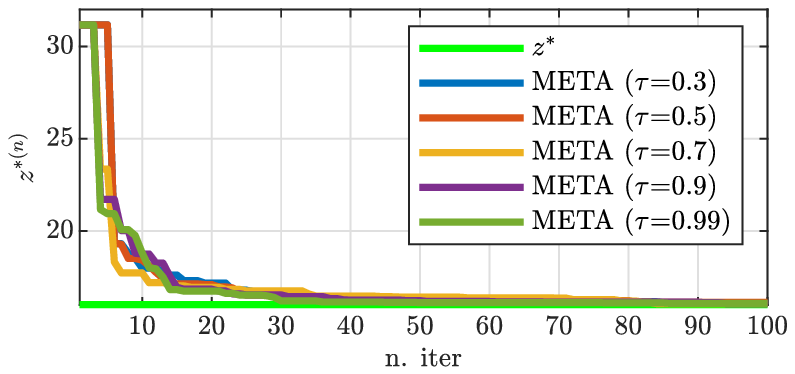}
    \caption{}
    \label{fig:sensitivity_tau_1}
    \end{subfigure}%
    \\
    \begin{subfigure}{\linewidth}
    \centering
    \includegraphics[scale=1]{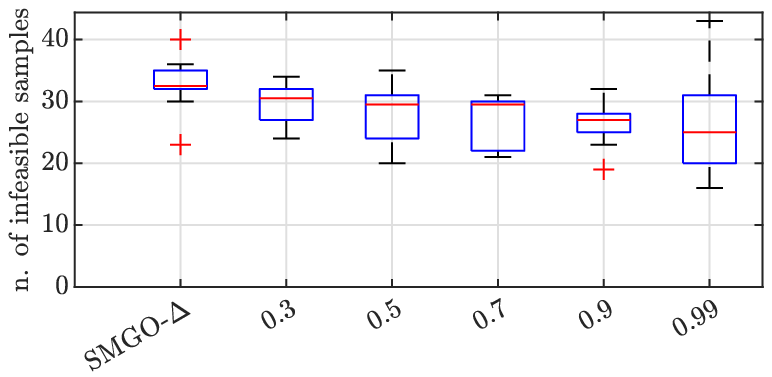}
    \caption{}
    \label{fig:sensitivity_tau_2}
    \end{subfigure}
    \caption{Sensitivity to $\tau$ -- Average $z^{*(n)}$ over $N=10$ experiments (a) and average number of constraint violations (b) over $N=10$ experiments.}
\end{figure}

\section{CONCLUSIONS}
The work proposes the application of a meta-learning rationale to SMGO-$\Delta$, with the objective of exploiting prior experience to make the optimization more efficient. The proposed method exploits a specific notion of similarity with past optimization problems as a prior to initialize two paramount hyper-parameters of the nominal method, namely $X^{(1)}$ and $\underline{\gamma}_f$.
We demonstrate that such an initialization results in a theoretical bound on the closeness of $z^{(1)}_\theta$ to the global minimum $z^*$. Numerical experiments confirm the theoretical findings and demonstrate the superiority with respect to the standard SMGO-$\Delta$, in terms of speed of convergence and reduction of the number of constraint violations during the optimization routine.

Further works will focus on the application of meta-learning approaches to other optimization techniques and on the extension of the proposed approach to non-fixed constraints.

\addtolength{\textheight}{-12cm}   % This command serves to balance the column lengths
                                  % on the last page of the document manually. It shortens
                                  % the textheight of the last page by a suitable amount.
                                  % This command does not take effect until the next page
                                  % so it should come on the page before the last. Make
                                  % sure that you do not shorten the textheight too much.

%%%%%%%%%%%%%%%%%%%%%%%%%%%%%%%%%%%%%%%%%%%%%%%%%%%%%%%%%%%%%%%%%%%%%%%%%%%%%%%%

%%%%%%%%%%%%%%%%%%%%%%%%%%%%%%%%%%%%%%%%%%%%%%%%%%%%%%%%%%%%%%%%%%%%%%%%%%%%%%%%

%%%%%%%%%%%%%%%%%%%%%%%%%%%%%%%%%%%%%%%%%%%%%%%%%%%%%%%%%%%%%%%%%%%%%%%%%%%%%%%%
%\section*{APPENDIX}

%\section*{ACKNOWLEDGMENT}

%%%%%%%%%%%%%%%%%%%%%%%%%%%%%%%%%%%%%%%%%%%%%%%%%%%%%%%%%%%%%%%%%%%%%%%%%%%%%%%%

\bibliographystyle{IEEEtran}
\bibliography{bib}

\end{document}